\newcommand{\GL}{\operatorname{GL}_2}
\newcommand{\PG}{\operatorname{PGL}_2}
\newcommand{\F}{\mathbb{F}}
\newcommand{\NR}{\mathcal{NR}}
\newcommand{\I}{\mathcal{I}}
\newcommand{\Tr}{\operatorname{Tr}}
\theoremstyle{definition}
\newtheorem{definition}{Definition}
\theoremstyle{remark}
\newtheorem{remark}[definition]{Remark}
\newtheorem{example}[definition]{Example}
\theoremstyle{plain}
\newtheorem{lemma}[definition]{Lemma}
\newtheorem{corollary}[definition]{Corollary}
\newtheorem{theorem}[definition]{Theorem}
\begin{document}
\title{On the Recursive Behaviour of the Number of Irreducible Polynomials with Certain Properties over Finite Fields}
	
	\author{Max Schulz\\
		{University of Rostock}, Germany\\
		\tt {max.schulz@uni-rostock.de}}

\maketitle
\begin{abstract} Let $\F_q$ be the field with $q$ elements and of characteristic $p$. For $a\in\F_p$ consider the set \begin{equation*}
    S_a(n)=\{f\in\F_q[x]\mid\deg(f)=n,~f\text{ irreducible, monic and} \Tr(f)=a\}.
\end{equation*} In a recent paper, Robert Granger proved for $q=2$ and $n\ge 2$ \begin{equation*}
    |S_1(n)|-|S_0(n)|=\begin{cases}
        0,&\text{if }2\nmid n\\
        |S_1(n/2)|,&\text{if } 2\mid n
    \end{cases}
\end{equation*} We will prove a generalization of this result for all finite fields. This is possible due to an observation about the size of certain subsets of monic irreducible polynomials arising in the context of a group action of subgroups of $\PG(\F_q)$ on monic polynomials. Additionally, it enables us to apply these methods to prove two further results that are very similar in nature.
\end{abstract}
\section*{Introduction}
Let $\F_q$ be the finite field with $q$ elements, $p$ the prime dividing $q$, $\I_q$ the set of monic irreducible polynomials in $\F_q[x]$ and $\I_q^n$ the set of monic irreducible polynomials of degree $n$. Moreover, $\Tr_{q/p}$ is the absolute trace. Since $\operatorname{Tr}_{q^n/p}(\alpha)=\Tr_{q^n/p}(\alpha^q)$ for a root $\alpha$ of $f\in\I_q^n$ we can define $\Tr(f):=\Tr_{q^n/p}(\alpha)$. For an $a\in \F_p$ consider the set $$S_a(n):=\{f\in\I_q^n\mid\Tr(f)=a\}.$$ Let $f\in\I_q^n$ be of the form $f=x^n+\sum_{i=0}^{n-1}a_ix^i$ then the trace is given by $$\Tr(f)=-\Tr_{q/p}(a_{n-1}).$$ In \cite{granger} it is proved that for $q=2$ and $n\ge 2$ \begin{equation*} |S_1(n)|-|S_0(n)|=\begin{cases}0, &\text{if }n\equiv 1\pmod{2}\\
|S_1(n/2)|, &\text{otherwise}.
\end{cases}\end{equation*} We are going to prove the following extension of this result: \begin{theorem}\label{ahmad}
    For all $n\ge 1$ and all finite fields $\F_q$ we have \begin{equation*}
        \sum\limits_{a\in\F_p^{\ast}} |S_a(n)|-(p-1)|S_0(n)|=\begin{cases}
            0,&\text{if }p\nmid n\\
            \sum\limits_{a\in\F_p^{\ast}}|S_a(n/p)|,&\text{otherwise}
        \end{cases}
    \end{equation*}
\end{theorem}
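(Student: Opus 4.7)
The plan is to first reduce the identity to a simpler recursion using the scaling subgroup $\{x\mapsto cx:c\in\F_p^\ast\}\subset\PG(\F_q)$, and then treat the cases $p\nmid n$ and $p\mid n$ separately. Acting on $f=x^n+a_{n-1}x^{n-1}+\cdots\in\I_q^n$ by $c\in\F_p^\ast$ produces the monic irreducible $c^nf(c^{-1}x)$ whose $x^{n-1}$-coefficient is $ca_{n-1}$, so $\Tr$ is multiplied by $c$ and scaling bijects $S_a(n)$ onto $S_{ca}(n)$. Hence $|S_a(n)|=|S_1(n)|$ for every $a\in\F_p^\ast$ and every $n$, and the theorem collapses to
\[
|S_1(n)|-|S_0(n)|=\begin{cases}0,&p\nmid n,\\[2pt]|S_1(n/p)|,&p\mid n.\end{cases}
\]

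For $p\nmid n$ I use the translation subgroup $\{x\mapsto x+c:c\in\F_q\}$: if $\tilde f(x)=f(x-c)$, then $\tilde f\in\I_q^n$ and $\Tr(\tilde f)=\Tr(f)+n\Tr_{q/p}(c)$. Since $p\nmid n$ and $\Tr_{q/p}$ is surjective, the map $c\mapsto n\Tr_{q/p}(c)$ covers $\F_p$; fixing $c$ with $n\Tr_{q/p}(c)=1$ gives a bijection $S_0(n)\to S_1(n)$, and the difference vanishes.

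For $p\mid n$ translations preserve each $S_a(n)$, so I turn to character sums. Orthogonality of the additive characters of $\F_p$ gives
\[
|\I_q^n|-p|S_0(n)|=-\frac{1}{n}\sum_{\psi\neq 1}\sum_{\substack{\alpha\in\F_{q^n}\\ \deg_{\F_q}\alpha=n}}\psi(\Tr_{q^n/p}\alpha),
\]
and Möbius inversion on the subfield lattice together with the identity $\Tr_{q^n/p}|_{\F_{q^d}}=(n/d)\Tr_{q^d/p}$ reduces this to $-\tfrac{p-1}{n}\sum_{d\mid n,\,p\mid n/d}\mu(n/d)q^d$. The same expansion applied to $n/p$ yields $(p-1)|S_1(n/p)|=\tfrac{p-1}{n}\sum_{e\mid n/p,\,p\nmid(n/p)/e}\mu((n/p)/e)q^e$, and the substitution $e=(n/p)/\ell$ with $\ell$ a squarefree divisor of $n/p$ coprime to $p$, together with $\mu(p\ell)=-\mu(\ell)$, identifies the two sums. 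The main obstacle is this divisor-lattice bookkeeping when $v_p(n)\geq 2$: one needs to verify that only $d$ with $v_p(n/d)=1$ survive the Möbius cancellation, and that the parametrizations on the two sides agree after the substitution, with $\mu(p)=-1$ exactly absorbing the overall minus sign.
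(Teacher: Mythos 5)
Your proof is correct, but it takes a genuinely different route from the paper. The paper derives Theorem \ref{ahmad} as a corollary of its general orbit-counting machinery: for the order-$p$ group $G$ generated by $x\mapsto x+1$ with quotient map $Q_G(x)=x^p-x$, Varshamov's criterion identifies $\bigcup_{a\neq 0}S_a(n)$ with the set $C(Q_G,n)$ of $f$ whose transform $f(x^p-x)$ stays irreducible, and then Theorem \ref{backbone} (which rests on the bijection $\delta_{Q_G}$ between $\I_q$ and the set of $G$-orbits of irreducibles) gives the recursion at once. You instead argue directly: the scaling $f(x)\mapsto c^nf(c^{-1}x)$ correctly shows $|S_a(n)|=|S_1(n)|$ for $a\neq 0$, reducing everything to $|\I_q^n|-p|S_0(n)|$; the translation argument for $p\nmid n$ is exactly the paper's own remark on the balanced case; and for $p\mid n$ your character-sum/M\"obius computation checks out --- the inner sum over $\F_{q^d}$ vanishes unless $p\mid n/d$, squarefreeness kills $v_p(n/d)\ge 2$, and the substitution $e=(n/p)/\ell$ with $\mu(p\ell)=-\mu(\ell)$ matches the two sides term by term (I verified this, including the prefactor $\tfrac{p-1}{n}$). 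One small presentational slip: the right-hand quantity you expand is $(p-1)|S_1(n/p)|=|\I_q^{n/p}|-|S_0(n/p)|$, not $|\I_q^{n/p}|-p|S_0(n/p)|$, so it is not literally ``the same expansion''; your stated formula is nonetheless correct once you combine the character sum for $|S_0(n/p)|$ with Gauss's formula for $|\I_q^{n/p}|$, the conditions $p\mid (n/p)/e$ and $p\nmid (n/p)/e$ being complementary. What each approach buys: yours is elementary, self-contained, and yields explicit closed forms for the differences, but it is tailored to the trace condition; the paper's argument is less explicit but uniform, delivering Theorems \ref{quad} and \ref{lastexam} by the same mechanism with only the choice of cyclic subgroup changed.
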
\begin{remark}
    Note that the balanced case, that is, $$\sum\limits_{a\in\F_p^{\ast}}|S_a(n)|-(p-1)|S_0(n)|=0,$$ where $p\nmid n$, is not hard to see. Let $f\in S_0(n)$, so $\Tr(f)=0$ which means that $\Tr_{q^n/p}(\alpha)=0$ for $\alpha$ a root of $f$. Let $a\in\F_p^{\ast}$ and consider an element $b_a\in\F_q^{\ast}$ such that $\Tr_{q^n/p}(b_a)=a$. Such an element exists since $\Tr_{q^n/p}=\Tr_{q/p}\circ\Tr_{q^n/q}$ and for all $b\in\F_q^{\ast}$ $$\Tr_{q^n/q}(b)=n\cdot b\neq 0$$ if $p\nmid n$, hence $\Tr_{q^n/p}$ as a map from $\F_q$ to $\F_p$ is surjective as $\Tr_{q/p}:\F_q\to\F_p$ and $\Tr_{q^n/q}\vert_{\F_q}:\F_q\to \F_q$ are surjective. 
The polynomial $f(x-b_a)$ has trace $a$, so the map $f(x)\mapsto f(x-b_a)$ is a bijection between $S_0(n)$ and $S_a(n)$, thus $|S_0(n)|=|S_a(n)|$ for all $a\in\F_p$ and the balanced case follows. A similar idea does not work for the case that $p\mid n$ since then $\Tr_{q^n/q}\vert_{\F_q}$ is not surjective anymore.
\end{remark} Another example that exhibits a similar pattern is the following: Let $q$ be odd and $u,v\in\F_q$ with $u\neq v$. Define the following two sets for $n\ge 2$ \begin{align*}
    C_{u,v}(n)&:=\left\{f\in\I_q^n\mid \left(\frac{f(u)\cdot f(v)}{q}\right)=-1\right\}\\
    D_{u,v}(n)&:=\I_q^n\setminus C_{u,v}(n)=\left\{f\in\I_q^n\mid \left(\frac{f(u)\cdot f(v)}{q}\right)=1\right\}.
\end{align*} Here $\left(\frac{\cdot}{q}\right):\F_q^{\ast}\to \{1,-1\}\le \F_q^{\ast}$ denotes the Legendre-Symbol \begin{equation*}
    \left(\frac{a}{q}\right)=a^{(q-1)/2}=\begin{cases}
        1, &a \text{ is a square in }\F_q^{\ast}\\
        -1,&\text{otherwise}
    \end{cases}
\end{equation*} We prove the following theorem: \begin{theorem}\label{quad}
Let $q\equiv 1 \pmod 2$. For all $u,v\in\F_q$ with $u\neq v$ and $n\ge 2$ we have \begin{equation*}
        |C_{u,v}(n)|-|D_{u,v}(n)|=\begin{cases}
            0, &2\nmid n\\
            |C_{u,v}(n/2)|,&2\mid n
        \end{cases}
    \end{equation*}
\end{theorem}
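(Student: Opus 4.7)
The plan is to reformulate the statement as a character-sum problem and then split into cases by the parity of \(n\). For \(f\in\I_q^n\) with a root \(\alpha\in\F_{q^n}\) and \(u\in\F_q\), the factorisation \(f(u)=N_{\F_{q^n}/\F_q}(u-\alpha)\) combined with the compatibility \(\chi_{q^n}(\beta)=\bigl(\tfrac{N_{\F_{q^n}/\F_q}(\beta)}{q}\bigr)\) between the Legendre symbol and the quadratic character \(\chi_{q^n}\) of \(\F_{q^n}^{\ast}\) yields
\[
|C_{u,v}(n)|-|D_{u,v}(n)| \;=\; -\frac{1}{n}\sum_{\substack{\alpha\in\F_{q^n}\\ \deg_{\F_q}\alpha=n}}\chi_{q^n}\bigl((u-\alpha)(v-\alpha)\bigr).
\]
Möbius inversion converts this into \(-\frac{1}{n}\sum_{d\mid n}\mu(n/d)\,S_d\) with \(S_d=\sum_{\alpha\in\F_{q^d}}\chi_{q^n}((u-\alpha)(v-\alpha))\). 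The key dichotomy is that \(\chi_{q^n}|_{\F_{q^d}^{\ast}}\) coincides with \(\chi_{q^d}\) when \(n/d\) is odd, and is the trivial character when \(n/d\) is even.

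For odd \(n\) every \(d\mid n\) has \(n/d\) odd, so each \(S_d\) reduces to the standard quadratic character sum \(\sum_{\alpha\in\F_{q^d}}\chi_{q^d}((u-\alpha)(v-\alpha))=-1\), and the Möbius sum collapses to \([n=1]=0\). Consistent with the group-action viewpoint of the abstract, an alternative is to produce an involution \(\sigma\in\PG(\F_q)\) swapping \(u\) and \(v\) for which \((cu+d)(cv+d)\) is a non-square in \(\F_q\); parameterising the \(q-1\) such involutions projectively and invoking the same character sum shows that exactly \((q-1)/2\) of them have this property, and for odd \(n\) the induced action on \(\I_q^n\) after monic renormalisation is then a fixed-point-free involution flipping \(\bigl(\tfrac{f(u)f(v)}{q}\bigr)\), yielding a bijection \(C_{u,v}(n)\leftrightarrow D_{u,v}(n)\).

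For even \(n=2m\) every involution swapping \(u\) and \(v\) preserves the Legendre sign, so the odd-case trick fails; the argument instead exploits that \(\chi_{q^n}\) is trivial on \(\F_{q^m}^{\ast}\). Let \(\tau=F^m\) be the nontrivial element of \(\Gal(\F_{q^n}/\F_{q^m})\). A short computation shows \((u-\alpha)(v-\alpha)\in\F_{q^m}\) if and only if \(\alpha+\tau(\alpha)=u+v\); I let \(A\subseteq\I_q^n\) be the set of \(f\) admitting such a root, and note \(A\subseteq D_{u,v}(n)\) by triviality of \(\chi_{q^n}\) on \(\F_{q^m}^{\ast}\). Running the Möbius inversion directly yields \(n(|D_{u,v}(n)|-|C_{u,v}(n)|)=-|A_n|\), where \(|A_n|\) is the number of degree-\(n\) elements \(\alpha\) with \(\alpha+\tau(\alpha)=u+v\); dividing by \(n\) gives \(|C_{u,v}(n)|-|D_{u,v}(n)|=|A|\). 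The remaining step is to identify \(|A|=|C_{u,v}(m)|\). Parameterising the relevant roots as \(\alpha=(u+v)/2+\beta\) with \(\tau(\beta)=-\beta\), the quantity \(\gamma=\beta^2\in\F_{q^m}\) is necessarily a non-square, and a second Möbius inversion rewrites \(|A|\) as the number of \(g\in\I_q^m\) with a root \(\gamma\) satisfying \(\chi_{q^m}(\gamma+((u-v)/2)^2)=-1\). The pivotal identity is that for every \(d\mid m\) the two inner character sums \(\sum_{\gamma\in\F_{q^d}}\chi_{q^m}((u-\gamma)(v-\gamma))\) and \(\sum_{\gamma\in\F_{q^d}}\chi_{q^m}(\gamma+((u-v)/2)^2)\) differ by exactly \(-1\), independently of the parity of \(m/d\); Möbius inversion then forces the degree-\(m\) totals to coincide for \(m\geq 2\), while \(m=1\) is verified by direct count.

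The main obstacle will be the even case: the equality \(|A|=|C_{u,v}(m)|\) is obtained by matching two superficially different character-sum expressions rather than by a transparent bijection, so one has to set up two parallel Möbius inversions and track the parity of \(n/d\) for the outer one and of \(m/d\) for the inner one carefully, with a separate check at \(m=1\) to account for vanishing terms.
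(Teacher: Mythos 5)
Your argument is correct, but it is genuinely different from the paper's. The paper realizes the pair $\{u,v\}$ as the fixed points of an involution $[A_{u,v}]\in\PG(\F_q)$, takes the quotient map $Q_{G_{u,v}}(x)=(x^2-uv)/(2x-(u+v))$ for the order-two group $G_{u,v}=\langle[A_{u,v}]\rangle$, shows via Lemma \ref{cohen} and a norm computation that $F^{Q_{G_{u,v}}}$ is irreducible exactly when $\left(\frac{F(u)F(v)}{q}\right)=-1$, checks $d(G_{u,v})=1$, and then quotes Theorem \ref{backbone}; the entire recursion $|C|-|D|=|C(n/2)|$ is thus absorbed into the orbit-counting bijection $\delta_{Q_G}$ of Corollary \ref{main2}. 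You instead work entirely with quadratic character sums over roots: the identity $\chi_{q^n}(\beta)=\bigl(\tfrac{N_{q^n/q}(\beta)}{q}\bigr)$, M\"obius inversion, and the dichotomy for $\chi_{q^n}\vert_{\F_{q^d}^\ast}$ according to the parity of $n/d$ give the odd case at once, and in the even case you identify the surviving quantity with the count of degree-$n$ elements satisfying $\alpha+\alpha^{q^{n/2}}=u+v$ and then, via the substitution $\beta=\alpha-(u+v)/2$, $\gamma=\beta^2$, with $|C_{u,v}(n/2)|$ through a second M\"obius inversion. I checked the two steps you state without proof --- that the outer inversion collapses to $-|A_n|$, and that the two inner sums over $\F_{q^d}$ differ by exactly $-1$ regardless of the parity of $m/d$ --- and both are correct, though the first is not as ``direct'' as you suggest: one has to compute $|\{\alpha\in\F_{q^d}:\alpha+\alpha^{q^{n/2}}=u+v\}|$ (which is $q^{d/2}$ when $n/d$ is odd and $1$ when $n/d$ is even) and match it term by term against $\mu(n/d)(q^d-2)$ and $\mu(n/d)(-1)$; this deserves to be written out. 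The trade-off is clear: your proof is elementary and self-contained, avoiding all of the machinery of \cite{schulz}, but it is bespoke to the quadratic character and to this particular pair of fixed points, whereas the paper's route proves Theorems \ref{ahmad}, \ref{quad} and \ref{lastexam} uniformly from the single combinatorial statement of Theorem \ref{comb}. Your parenthetical involution argument for the odd case ($f\mapsto[\sigma]\ast f$ for $\sigma$ swapping $u$ and $v$ with $(cu+d)(cv+d)$ a non-square) is in fact the closest point of contact with the paper's group-action viewpoint.
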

In \cite{granger} a group action of subgroups of $\PG(\F_q)$ on irreducible polynomials over $\F_q$ played a crucial role in some of the proofs, so we thought that ideas out of our recent paper \cite{schulz} could be utilized to prove similar results. Our proof of Theorem \ref{ahmad} relies on a general underlying principle which can be used to obtain Theorem \ref{quad} as well. Explaining how that principle works is the main goal of this paper. We give a quick overview:

For an element $A\in\GL(\F_q)$ we write $[A]\in\PG(\F_q)$ as its coset in $\PG(\F_q)$ and if $A$ is of the form $$A=\left(\begin{array}{cc}
     a& b\\
     c& d
     \end{array}\right)$$ then set $$[A]\circ x=\frac{ax+b}{cx+d}$$ as the corresponding linear rational function. For a subgroup $G\le \PG(\F_q)$ consider the set of $G$-invariant rational functions \begin{equation*}
         \F_q(x)^G:=\left\{Q(x)\in \F_q(x)\mid Q([A]\circ x)=Q(x)\text{ for all }[A]\in G\right\}. 
     \end{equation*} This is a subfield of $\F_q(x)$ with $[\F_q(x):\F_q(x)^G]=|G|$. Moreover, by L\"uroth's Theorem, there is a rational function $Q(x)=g(x)/h(x)\in \F_q(x)$ of degree $\deg(Q)=\max\{\deg(g),\deg(h)\}=|G|$ such that $\F_q(x)^G=\F_q(Q(x))$. Note that we always assume that the numerator and denominator of a rational function have no common factors. Every such generator $Q$ of $\F_q(x)^G$ can be normalized so that $Q=g/h$ with $\deg(g)=|G|$ and $0\le \deg(h)<\deg(g)$, we call these rational functions \textit{quotient maps} for $G$ and in what follows we write $Q_G$ for an arbitrary quotient map for $G$. In \cite{schulz}, we studied the factorization of rational transformations with quotient maps. A rational transformation of a polynomial $F$ with a rational function $Q=g/h$ is defined as \begin{equation}\label{rattrans}
         F^Q(x):=h(x)^{\deg(F)}\cdot F\left(\frac{g(x)}{h(x)}\right)
     \end{equation} so it is the numerator polynomial of the rational function $F(Q(x))$. To avoid ambiguity we set the numerator polynomial $g$ of $Q$ to be monic. Define the following two sets \begin{align*}
         C(Q_G,n)&:=\left\{f\in\I_q^n\mid f^{Q_G}\in\I_q^{|G|n}\right\}\\
         D(Q_G,n)&:=\I_q^n\setminus C(Q_G,n).
     \end{align*} So $C(Q_G,n)$ is the set of irreducible polynomials $f$ of degree $n$ that yield irreducible polynomials of degree $|G|\cdot n$ after transformation with quotient map $Q_G$. The following theorem will be the backbone of our proofs of Theorem \ref{ahmad} and \ref{quad}: \begin{theorem}\label{backbone}
         Let $G\le \PG(\F_q)$ be a cyclic subgroup of prime order $s$ and $Q_G\in \F_q(x)$ a quotient map for $G$. For all $n>d(G)$ (the number $d(G)$ will be defined before Example \ref{examp}) we have \begin{equation*}
             |C(Q_G,n)|-(s-1)|D(Q_G,n)|=\begin{cases}
                 0,&\text{ if }s\nmid n\\
                 |C(Q_G,\frac{n}{s})|,&\text{ if } s\mid n
             \end{cases}
         \end{equation*}
     \end{theorem}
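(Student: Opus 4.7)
The plan is to translate the dichotomy $f \in C(Q_G,n)$ versus $f \in D(Q_G,n)$ into a statement about field degrees, and then to double-count the elements of $\F_{q^n}$ of exact degree $n$.

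Fix $f \in \I_q^n$ with a root $\alpha \in \F_{q^n}$, and let $\beta$ be any root of $g(x) - \alpha h(x) \in \F_{q^n}[x]$, where $Q_G = g/h$ with $\deg g = s$ and $\deg h < s$; this polynomial has degree $s$. Because $Q_G$ is $G$-invariant and $G \le \PG(\F_q)$, the remaining roots are $[A]\circ\beta$ for $[A] \in G$, all defined over $\F_q(\beta)$. In the tower $\F_q(\alpha) \subseteq \F_q(\beta)$ the upper degree divides $s$, so it equals $1$ or $s$ since $s$ is prime. Unpacking the definition of $f^{Q_G}$ as the degree-$sn$ polynomial whose roots are the $Q_G$-preimages of the roots of $f$, we conclude that $f \in C(Q_G,n)$ iff $[\F_q(\beta):\F_q(\alpha)] = s$ (so $f^{Q_G}$ is the minimal polynomial of $\beta$), and $f \in D(Q_G,n)$ iff this degree is $1$, equivalently iff all $s$ preimages of $\alpha$ already lie in $\F_{q^n}$ (the $[A]$ have $\F_q$-coefficients and hence preserve $\F_{q^n}$).

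Let $T_n$ denote the set of elements of $\F_{q^n}$ of exact degree $n$ over $\F_q$, so $|T_n| = n\,|\I_q^n|$. For $\beta \in T_n$, the image $\alpha := Q_G(\beta)$ lies in $\F_{q^n}$ and has degree in $\{n, n/s\}$ over $\F_q$, with $n/s$ possible only when $s \mid n$; this yields a partition $T_n = T_n^{(1)} \sqcup T_n^{(s)}$. Each $f \in D(Q_G,n)$ contributes $ns$ elements to $T_n^{(1)}$, because each of its $n$ roots $\alpha$ has $s$ preimages in $\F_{q^n}$, all of degree exactly $n$ (since $\F_q(\alpha)=\F_{q^n}\supseteq\F_q(\beta)\supseteq\F_q(\alpha)$); conversely every $\beta \in T_n^{(1)}$ sits above a unique $\alpha$ whose minimal polynomial lies in $D$ (one of its preimages, $\beta$, is in $\F_{q^n}$). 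Hence $|T_n^{(1)}| = ns\,|D(Q_G,n)|$. Symmetrically, when $s \mid n$ each $\tilde f \in C(Q_G,n/s)$ contributes $n$ elements to $T_n^{(s)}$, and these exhaust $T_n^{(s)}$, so $|T_n^{(s)}| = n\,|C(Q_G,n/s)|$; while $T_n^{(s)} = \emptyset$ if $s \nmid n$. Equating $|T_n| = |T_n^{(1)}| + |T_n^{(s)}|$ and dividing by $n$ gives
\[ |C(Q_G,n)| + |D(Q_G,n)| = s\,|D(Q_G,n)| + [\,s \mid n\,]\,|C(Q_G,n/s)|, \]
which is the claimed identity after rearranging.

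The main technical obstacle, and the presumed reason for the hypothesis $n > d(G)$, is ruling out degeneracies of the cover $x \mapsto Q_G(x)$ at the specific $\alpha$ we consider: we need $g(x) - \alpha h(x)$ to have $s$ distinct roots (so that the relevant extension is separable and the count of preimages is exactly $s$) and $\beta$ not to be a fixed point of any nontrivial $[A] \in G$ (so that the $s$ elements $[A]\circ\beta$ are genuinely distinct, preventing a premature collapse of orbits under Galois). Only finitely many $\alpha \in \overline{\F_q}$ are bad, and they lie in an extension of $\F_q$ of bounded degree depending only on $G$; $d(G)$ should be chosen so that no irreducible $f$ of degree exceeding $d(G)$ has such a root among its conjugates, making the degree bookkeeping above valid for every $n > d(G)$.
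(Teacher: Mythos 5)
Your argument is a genuinely different route from the one in the paper, and its skeleton is correct. The paper deduces the theorem from the orbit-counting identity $N_q(n)=\omega_G(n,s)+\omega_G(ns,1)$ (Corollary \ref{easycomb}), which rests on the bijection $\delta_{Q_G}$ of Corollary \ref{main2}, and it then needs a separate case analysis to identify $|C(Q_G,n/s)|$ with $\omega_G(n,1)$ when $n/s\le d(G)$. You instead double-count the degree-$n$ elements of $\F_{q^n}$ along the fibers of $Q_G$, which is more self-contained and, pleasantly, makes the $s\mid n$ case uniform: for $\tilde f\in C(Q_G,n/s)$ the polynomial $g-\alpha h$ is irreducible, hence automatically has $s$ distinct roots, so no extra hypothesis on $n/s$ is needed on that side. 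Dividing your count by $n$ gives exactly the claimed identity.

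Two steps need to be tightened before this is a complete proof. First, $[\F_q(\beta):\F_q(\alpha)]\mid s$ does not follow merely from $\beta$ being a root of the degree-$s$ polynomial $g-\alpha h$; that only gives ``$\le s$'', which for $s\ge 3$ does not force the value to be $1$ or $s$. You need that the root set of $g-\alpha h$ is the single $G$-orbit $G\circ\beta$, on which the Frobenius of $\F_q(\alpha)$ acts as some fixed $[A_0]\in G$ because it commutes with the simply transitive $G$-action; the number of conjugates of $\beta$ over $\F_q(\alpha)$ is then the order of $[A_0]$, which divides $s$. Second, and more importantly, your last paragraph leaves the role of $d(G)$ as a wish (``$d(G)$ should be chosen so that\dots''), whereas the theorem uses the specific $d(G)$ defined before Example \ref{examp}, namely the largest degree of an irreducible $F$ with $F^{Q_G}=(\prod G\ast r)^k$ and $k>1$. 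You must actually verify that $n>d(G)$ rules out the degeneracies you list: if $f\in\I_q^n$ with $n>d(G)$, then $f^{Q_G}$ is a $G$-orbit polynomial (exponent $k=1$), hence a product of distinct irreducibles and in particular squarefree; since it is monic of degree $sn$ (Lemma \ref{qginv}) and factors as $\prod_i(g-\alpha_i h)$ over the $n$ roots $\alpha_i$ of $f$, each $g-\alpha_i h$ must have $s$ distinct roots forming a regular orbit. (For $n>2$ this also follows directly from Lemma \ref{bluherquad}.) With that link supplied, your count $|T_n^{(1)}|=ns\,|D(Q_G,n)|$ is justified and the proof closes.
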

Both results are immediate consequences of this theorem by choosing the right cyclic subgroups $G$ and quotient maps $Q_G$. The set $C(Q_G,n)$ can be occasionally described in terms of arithmetic properties that the coefficients of irreducible polynomials in $C(Q_G,n)$ need to satisfy if the quotient map $Q_G$ was chosen carefully.

The first part of this paper is mainly a recollection of ideas and results of \cite{schulz}. Afterwards we are going to prove our main theorem about a new combinatorial relationship between $G$-orbits of irreducible monic polynomials
from which Theorem \ref{backbone} follows. The last part is dedicated to proving Theorem \ref{ahmad} and \ref{quad}, as well as looking at one more example.
\section{Invariant Polynomials and Rational Transformations}
Every $[A]\in\PG(\F_q)$ induces a bijective map on $\overline{\F}_q\cup\{\infty\}$ via \begin{equation*}
    [A]\circ v=\frac{av+b}{cv+d},
\end{equation*} i.e. just plugging in $v$ into the linear rational function belonging to $[A]$. This induces a left group action of $\PG(\F_q)$ on $\overline{\F}_q\cup\{\infty\}$. An intimately related group action on polynomials is given by \begin{definition}
    Define $\ast: \PG(\F_q)\times \F_q[x]\to \F_q[x]$ with \begin{equation*}
        [A]\ast f(x)=\lambda_{A,f}(cx+d)^{\deg(f)}f\left(\frac{ax+b}{cx+d}\right).
    \end{equation*}  The factor $\lambda_{A,f}\in\F_q^{\ast}$ makes the output-polynomial monic.
\end{definition} In other words, $[A]\ast f$ is the normalized $([A]\circ x)$-transformation of $f$. This transformation and its variations are well-studied objects over finite fields, see for example \cite{GarefalakisGL}, \cite{ReisFQG}, \cite{reisEx}, \cite{sticht} and it has some theoretic applications, see for example \cite{granger}, \cite{kapetan} and \cite{reisConstr}. 

Let $G\le \PG(\F_q)$ be a subgroup and $G\circ \infty:=\{[A]\circ \infty|[A]\in G\}$ be the $G$-orbit of $\infty$. Define $$\NR_q^G:=\{f\in \F_q[x]\mid f \text{ monic and }f(\alpha)\neq 0\text{ for all }\alpha\in G\circ \infty\}$$
where $f(\infty)=\infty$ if $\deg(f)\ge 1$ and $f(\infty)=f$ if $\deg(f)\le 0$.
\begin{lemma}[{\cite[Lemma 7]{schulz}}]\label{basic}
Let $G\le\PG(\F_q)$. For all $f,g\in\mathcal{NR}_q^G$ and $[A],[B]\in G$ the following hold: \begin{enumerate}
    \item $\deg([A]\ast f)=\deg(f)$
    \item $[AB]\ast f=[B]\ast([A]\ast f)$ and $[I_2]\ast f=f$, so $\ast$ is a right group action of $G$ on $\mathcal{NR}_q^G$
    \item $[A]\ast (fg)=([A]\ast f)([A]\ast g)$
    \item $f$ irreducible if and only if $[A]\ast f$ irreducible
\end{enumerate}
\end{lemma}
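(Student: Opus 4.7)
The plan is to work with the unnormalized rational transformation
\[
T_A(f)(x) := (cx+d)^{\deg(f)}\, f\!\left(\frac{ax+b}{cx+d}\right),
\]
so that $[A]\ast f = \lambda_{A,f}\, T_A(f)$ is merely its monic rescaling, and to verify all four assertions first for $T_A$; the scalar $\lambda_{A,f}$ is recovered at the end.

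For (1), I expand
\[
T_A(f)(x) = \sum_{i=0}^{n} a_i(ax+b)^i(cx+d)^{n-i},
\]
where $f=\sum a_i x^i$ has degree $n$, and extract the coefficient of $x^n$: it is $a^n a_n$ if $c=0$ (nonzero since $\det A\neq 0$ forces $a\neq 0$), and $c^n f(a/c) = c^n f([A]\circ\infty)$ if $c\neq 0$ (nonzero by the NR-condition). Hence $\deg(T_A(f))=n$ and $\lambda_{A,f}$ is well-defined. The same expansion shows $T_A(f)(v)\neq 0$ for every $v\in G\circ\infty$: away from $v=[A^{-1}]\circ\infty$ the value factors as $(cv+d)^n f([A]\circ v)$ with $[A]\circ v\in G\circ\infty$; at $v=[A^{-1}]\circ\infty$ (assuming $c\neq 0$) all terms with $i<n$ vanish and the $i=n$ term is a nonzero multiple of $(\det A)^n$. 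Hence $\NR_q^G$ is closed under $\ast$, which is needed to make sense of the iterated composition in (2).

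For (2), a direct substitution shows $T_B(T_A(f)) = T_{AB}(f)$ as \emph{equal} polynomials: writing $B = \bigl(\begin{smallmatrix}\alpha&\beta\\\gamma&\delta\end{smallmatrix}\bigr)$, the outer factor $(\gamma x+\delta)^n\bigl(c\,\frac{\alpha x+\beta}{\gamma x+\delta}+d\bigr)^n$ collapses into $((c\alpha+d\gamma)x+(c\beta+d\delta))^n$, which is exactly the outer linear factor produced by $AB$, and the inner $f$-argument matches in the same way. Rescaling both sides to be monic then yields $[AB]\ast f = [B]\ast([A]\ast f)$, and $[I_2]\ast f = f$ is immediate. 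Part (3) is the one-line identity $T_A(fg) = T_A(f)\,T_A(g)$, which follows from $(cx+d)^{\deg(fg)} = (cx+d)^{\deg(f)}(cx+d)^{\deg(g)}$; monicization commutes with products.

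For (4), any nontrivial monic factorization $f=gh$ in $\F_q[x]$ satisfies $g,h\in\NR_q^G$, because their roots lie among those of $f$; then (1) and (3) give a nontrivial factorization $[A]\ast f = ([A]\ast g)([A]\ast h)$. The converse applies the same reasoning to $[A^{-1}]\ast([A]\ast f) = f$, which is (2) with $B=A^{-1}$. The main obstacle I anticipate is the closure of $\NR_q^G$ under $\ast$ needed already in (2), especially the boundary case $v=[A^{-1}]\circ\infty$ where the rational formula for $T_A(f)(v)$ is singular; the remedy is to work with the polynomial expansion, where this point turns out to be harmless.
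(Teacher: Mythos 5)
Your argument is correct and complete: the identity $T_B(T_A(f))=T_{AB}(f)$, the closure of $\NR_q^G$ under $\ast$ (including the boundary point $v=[A^{-1}]\circ\infty$, where the $i=n$ term contributes $a_n(-\det A/c)^{n}\neq 0$), and the multiplicativity/degree-preservation steps are exactly the standard direct verification. The paper itself offers no proof to compare against --- it imports the lemma verbatim from \cite[Lemma~7]{schulz} --- so there is nothing further to reconcile.
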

The first and forth item show that $G$ also acts on $\I_q^n$ for $n\ge 2$ (since $G\circ\infty\subseteq \F_q\cup\{\infty\}$) and on \begin{align*}
    \I_q^G&:=\NR_q^G\cap \I_q.
\end{align*} We denote a $G$-orbit in $\NR_q^G$ as $G\ast r:=\{[A]\ast r\mid  [A]\in G\}$.
\begin{definition}
    A polynomial $f\in\NR_q^G$ is called $G$-orbit polynomial if there is an irreducible polynomial $r\in\I_q^G$ such that \begin{equation*}
        f=\prod\limits_{t\in G\ast r}t=:\prod G\ast r
    \end{equation*}
\end{definition} A $G$-invariant polynomial is a polynomial $f\in\NR_q^G$ such that $[A]\ast f=f$ for all $[A]\in G$. Every $G$-invariant polynomial can be written as the product of $G$-orbit polynomials (which are $G$-invariant by Lemma \ref{basic} 3.), so $G$-orbit polynomials can be seen as the atoms of $G$-invariant polynomials. 

Next we want to recollect some facts about rational transformations. For $Q=g/h$ with $\gcd(g,h)=1$ and $F\in \F_q[x]$ we write $F^Q\in \F_q[x]$ as the $Q$-transform of $F$ with $Q$ as in (\ref{rattrans}). It is obvious that if $F^Q$ is irreducible then $F$ has to be irreducible. The following lemma gives a necessary and sufficient condition for the irreducibility of $F^Q$: \begin{lemma}[{\cite[Lemma 1]{cohensLem}}] \label{cohen}
Let $Q(x)=g(x)/h(x)\in \F_q(x)$ and $F\in \F_q[x]$. Then $F^{Q}\in\F_q[x]$ is irreducible if and only if $F\in \F_q[x]$ is irreducible and $g(x)-\alpha h(x)$ is irreducible over $\F_q(\alpha)[x]$, where $\alpha$ is a root of $F$.
\end{lemma}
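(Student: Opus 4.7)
The plan is to analyze $F^Q$ via its factorization over $\overline{\F}_q$: writing $F(y)=\mathrm{lc}(F)\prod_{i=1}^n(y-\beta_i)$ and substituting $y=g(x)/h(x)$ gives
\[
    F^Q(x) \;=\; \mathrm{lc}(F)\prod_{i=1}^n\bigl(g(x)-\beta_i h(x)\bigr).
\]
This identity is the bridge between the factorizations of $F$ over $\F_q$, of $g-\alpha h$ over $\F_q(\alpha)$, and of $F^Q$ over $\F_q$, and the proof then splits into one easy reduction plus two Galois-theoretic arguments.

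The easy reduction shows that $F$ must be irreducible whenever $F^Q$ is: from $h^{\deg F}F(g/h)=h^{\deg F_1}F_1(g/h)\cdot h^{\deg F_2}F_2(g/h)$ one reads off $F^Q=F_1^Q F_2^Q$, so any non-trivial factorization of $F$ produces one of $F^Q$. So assume henceforth that $F$ is irreducible with root $\alpha$, whose Frobenius conjugates $\alpha^{q^i}$ for $0\le i<n$ are exactly the roots of $F$. For the forward implication, suppose $g-\alpha h=P\cdot R$ factors non-trivially in $\F_q(\alpha)[x]$. Applying Frobenius $\sigma$ to the coefficients yields $g-\alpha^{q^i}h=\sigma^i(P)\cdot\sigma^i(R)$ for each $i$, hence
\[
    F^Q \;=\; \mathrm{lc}(F)\left(\prod_{i=0}^{n-1}\sigma^i(P)\right)\left(\prod_{i=0}^{n-1}\sigma^i(R)\right),
\]
where each bracketed product is $\Gal(\F_q(\alpha)/\F_q)$-invariant and therefore lies in $\F_q[x]$, producing a non-trivial factorization of $F^Q$ --- contradiction.

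For the converse, pick any root $\gamma$ of the (assumed irreducible) $g-\alpha h$; since $\gcd(g,h)=1$ forces $h(\gamma)\ne 0$, the relation $\alpha=g(\gamma)/h(\gamma)$ gives $\F_q(\alpha)\subseteq\F_q(\gamma)$, and the tower law yields
\[
    [\F_q(\gamma):\F_q] \;=\; n\cdot\deg(g-\alpha h) \;=\; \deg(F^Q).
\]
Since $F^Q(\gamma)=h(\gamma)^n F(\alpha)=0$, the minimal polynomial of $\gamma$ over $\F_q$ divides $F^Q$ with matching degree, forcing $F^Q$ to equal it up to a scalar and hence to be irreducible. The main subtlety is the final equality $\deg(g-\alpha h)=\deg(Q)$: a priori the leading coefficients of $g$ and $\alpha h$ could cancel, but this requires $\alpha\in\F_q$, i.e., $n=1$, in which case the lemma is tautological as $F^Q=g-\alpha h$; for $n\ge 2$ no cancellation is possible, and the proof goes through in all cases.
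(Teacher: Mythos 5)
The paper does not prove this lemma---it is quoted verbatim from Cohen's work---so there is no in-paper argument to compare against. Your proof is a correct, self-contained version of the standard argument: the factorization $F^Q=\mathrm{lc}(F)\prod_i\bigl(g-\beta_i h\bigr)$ over $\overline{\F}_q$, Frobenius conjugates of a factorization of $g-\alpha h$ for the forward direction, and the tower $\F_q\subseteq\F_q(\alpha)\subseteq\F_q(\gamma)$ with a degree count for the converse; this is essentially Cohen's own proof. One caveat worth recording: both the statement and your reduction step ``$F=F_1F_2$ nontrivial $\Rightarrow$ $F^Q=F_1^QF_2^Q$ nontrivial'' implicitly assume that no factor $g-\beta h$ collapses to a constant, i.e.\ that $\deg(F^Q)=\deg(F)\deg(Q)$. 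When $\deg g=\deg h$ and some root of $F$ equals $\mathrm{lc}(g)/\mathrm{lc}(h)$ this fails: over $\F_3$ with $Q=x/(x+1)$ and $F=y^2-y$ one gets $F^Q=-x$, which is irreducible although $F$ is not. Your closing remark dismisses cancellation by arguing it forces $n=1$, but that argument presupposes $F$ irreducible, which is exactly what is not yet known in the reduction step. In the paper's setting this is harmless, since quotient maps are normalized with $\deg g>\deg h$, so no cancellation can occur and your argument is complete as applied.
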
  Now consider a quotient map $Q_G\in\F_q(x)$ of $G\le \PG(\F_q)$. We have \begin{lemma}[{\cite[Lemma 13 and Lemma 14]{schulz}}]\label{qginv}
Let $F\in \F_q[x]$ be a monic polynomial, then $F^{Q_G}\in\mathcal{NR}_q^G$ and $F^{Q_G}$ is $G$-invariant. Moreover, $F^{Q_G}$ is of degree $\deg(F^{Q_G})=|G|\cdot \deg(F)$.
\end{lemma}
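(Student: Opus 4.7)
The whole plan rests on a single factorization. Setting $m := \deg F$ and letting $\alpha_1,\dots,\alpha_m \in \overline{\F}_q$ be the roots of $F$, direct substitution into \eqref{rattrans} gives
\[
F^{Q_G}(x) \;=\; \prod_{i=1}^{m}\bigl(g(x) - \alpha_i\, h(x)\bigr).
\]
Each factor has degree exactly $|G|$ (because $\deg g = |G| > \deg h$) and leading coefficient $1$ (because $g$ is monic). Multiplying them out shows at once that $F^{Q_G}$ is monic of degree $m\cdot |G|$, settling the monicity and degree assertions.

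For $F^{Q_G}\in\NR_q^G$ I would use the same factorization to check that no $\beta\in G\circ\infty$ is a root. Because $\deg g>\deg h$, we have $Q_G(\infty)=\infty$; $G$-invariance of $Q_G$ then propagates this to $Q_G(\beta)=Q_G([A]\circ\infty)=Q_G(\infty)=\infty$ for every $\beta=[A]\circ\infty$. For finite $\beta$ this forces $h(\beta)=0$, and $\gcd(g,h)=1$ gives $g(\beta)\ne 0$, so $g(\beta)-\alpha_i h(\beta)=g(\beta)\ne 0$ for every $i$, which yields $F^{Q_G}(\beta)\ne 0$. The case $\beta=\infty$ is immediate from $F^{Q_G}(\infty)=\infty$ whenever $\deg F^{Q_G}\ge 1$.

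For $G$-invariance, fix $[A]\in G$ and write $[A]\circ x=(ax+b)/(cx+d)$. Since $\deg g,\deg h \le |G|$, the expressions
\[
g_A(x):=(cx+d)^{|G|}\,g\!\left(\tfrac{ax+b}{cx+d}\right),\qquad h_A(x):=(cx+d)^{|G|}\,h\!\left(\tfrac{ax+b}{cx+d}\right)
\]
are polynomials, and the identity $Q_G([A]\circ x)=Q_G(x)$ becomes $g_A/h_A = g/h$. Coprimality of $g$ and $h$, together with the degree bound $\deg g_A\le|G|=\deg g$, forces $g_A=\mu_A g$ and $h_A=\mu_A h$ for a \emph{common} scalar $\mu_A\in\F_q^\ast$. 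Feeding this back into the factorization,
\[
(cx+d)^{m|G|}\,F^{Q_G}([A]\circ x) \;=\; \prod_{i=1}^m\bigl(g_A(x)-\alpha_i h_A(x)\bigr) \;=\; \mu_A^m\,F^{Q_G}(x),
\]
so $[A]\ast F^{Q_G}$ and $F^{Q_G}$ are proportional; since both are monic they coincide. The only delicate step is the extraction of the single common scalar $\mu_A$: it rests on $\gcd(g,h)=1$ and on the careful degree bookkeeping that guarantees $g_A,h_A$ are honest polynomials of the predicted degree, after which uniqueness of the coprime representation of a rational function does the rest. Everything else reduces to routine manipulation using Lemma \ref{basic} and the definitions.
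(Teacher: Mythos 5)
Your proof is correct. Note that the paper itself offers no proof of this lemma --- it is imported verbatim from \cite{schulz} (Lemmas 13 and 14 there) --- so there is no in-paper argument to compare against; judged on its own, your argument is a sound and self-contained derivation. The root factorization $F^{Q_G}(x)=\prod_{i}(g(x)-\alpha_i h(x))$ is exactly the right engine: the normalization $\deg g=|G|>\deg h$ with $g$ monic immediately gives monicity and the degree formula; the observation that $h$ vanishes on the finite points of $G\circ\infty$ (because $Q_G$ maps the whole orbit to $\infty$) together with $\gcd(g,h)=1$ gives membership in $\NR_q^G$; and the extraction of a single scalar $\mu_A$ with $g_A=\mu_A g$, $h_A=\mu_A h$ from $g_Ah=gh_A$, coprimality, and the bound $\deg g_A\le |G|$ is handled correctly, including the point that $\mu_A\neq 0$ since $[A]\circ x$ is a non-constant M\"obius map. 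The only steps worth spelling out a little more are the degenerate cases ($\deg F=0$, and $\infty\in G\circ\infty$ itself), both of which you do address and which cause no trouble.
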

\begin{theorem}[{\cite[Main Theorem, Theorem 22 and Corollary 23]{schulz}}]\label{othermain}
        Let $F\in \F_q[x]$ be monic and irreducible, $G\le\PG(\F_q)$ a subgroup and $Q_G=g/h\in \F_q(x)$ a quotient map for $G$. Then there is an irreducible monic polynomial $r\in \F_q[x]$ with $\deg(F)|\deg(r)$ and an integer $k>0$ such that $$F^{Q_G}(x)=\left(\prod G\ast r \right)^k.$$ Additionally $F^{Q_G}$ is an orbit polynomial, i.e. $k=1$, if $|G\circ v|=|G|$ for a root $v\in\overline{\F}_q$ of $F^{Q_G}$. In the case that $F^{Q_G}$ is an orbit polynomial the degree of every irreducible factor of $F^{Q_G}$ can be calculated via \begin{equation*}
            \deg(r)=\frac{|G|}{|G\ast r|}\cdot  \deg(F).
        \end{equation*}
\end{theorem}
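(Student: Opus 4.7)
The plan is to exploit the factorisation $F^{Q_G}(x)=\prod_{\beta}(g(x)-\beta h(x))$, where $\beta$ ranges over the roots of $F$, together with the $G$-invariance of $F^{Q_G}$ provided by Lemma \ref{qginv}. Since $G$ permutes the irreducible factors of $F^{Q_G}$ (by Lemma \ref{basic}), the polynomial is a product of $G$-orbit polynomials; the two remaining tasks are to show that only one orbit appears and to pin down the exponent $k$.

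First I would prove that all irreducible factors of $F^{Q_G}$ sit in a single $G$-orbit. Given two such factors $r_1,r_2$ with roots $v_1,v_2$, both $Q_G(v_i)$ are roots of the irreducible $F$, so some $\sigma\in\Gal(\overline{\F}_q/\F_q)$ sends $Q_G(v_1)$ to $Q_G(v_2)$; because $Q_G\in\F_q(x)$, this rewrites as $Q_G(\sigma v_1)=Q_G(v_2)$. Two elements sharing a $Q_G$-value lie in a common $G$-orbit, since the fibres of a generator of $\F_q(x)^G$ are precisely $G$-orbits, hence $v_2=[A]\circ\sigma v_1$ for some $[A]\in G$. Tracking how $\ast$ acts on roots then forces $r_2=[A^{-1}]\ast r_1$, so $F^{Q_G}=(\prod G\ast r)^k$ for some $r\in\I_q^G$ and an integer $k\ge 1$. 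The divisibility $\deg(F)\mid\deg(r)$ drops out from $\F_q(Q_G(v))\subseteq\F_q(v)$ at any root $v$ of $r$.

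For the orbit-polynomial criterion I would assume $|G\circ v|=|G|$ for some root $v$ of $F^{Q_G}$, i.e.\ that the stabiliser $G_v$ is trivial. The key local fact to establish is that the multiplicity of $v$ as a root of $g(x)-Q_G(v)h(x)$ equals $|G_v|$, which is the standard ramification statement for the degree-$|G|$ Galois cover $x\mapsto Q_G(x)$. Any other root $v'$ of $F^{Q_G}$ is related to $v$ by a combination of the $G$-action and the absolute Galois group; both preserve the order of the stabiliser in $G$ (the latter because $G\le\PG(\F_q)$ is $\F_q$-defined), so $|G_{v'}|=1$ as well. Every factor $g(x)-\beta h(x)$ is therefore separable, and factors corresponding to distinct roots of $F$ share no zero, so $F^{Q_G}$ is separable and $k=1$. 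The identity $|G|\deg(F)=|G\ast r|\deg(r)$ then yields the stated formula for $\deg(r)$.

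The main obstacle will be the ramification statement that the multiplicity of $v$ in $g(x)-Q_G(v)h(x)$ equals $|G_v|$, together with the equivariance argument transporting the trivial-stabiliser property across all roots. This calls for a careful local analysis of the cover defined by $Q_G$; once it is in place, the remaining assertions reduce to degree bookkeeping and the factorisation principle above.
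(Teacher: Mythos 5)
This theorem is not proved in the present paper at all: it is imported verbatim from \cite{schulz} (Main Theorem, Theorem 22 and Corollary 23 there), so there is no in-text proof to measure your argument against. Judged on its own merits, your outline is essentially correct and follows the natural route, which is also the one taken in the cited source: factor $F^{Q_G}(x)=\prod_{\beta}\bigl(g(x)-\beta h(x)\bigr)$ over the roots $\beta$ of $F$, use that each $g-\beta h$ has degree exactly $|G|$ (since $\deg g=|G|>\deg h$) and that its root set is precisely the fibre $Q_G^{-1}(\beta)$, invoke that fibres of the quotient map are single $G$-orbits (this is exactly the statement that $\F_q(x)/\F_q(Q_G)$ is Galois with group $G$, i.e.\ $\mathbb{P}^1/G\cong\mathbb{P}^1$ with quotient morphism $Q_G$), and transport factors between fibres via the Frobenius, which commutes with $G$ because $G$ is defined over $\F_q$. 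The divisibility $\deg(F)\mid\deg(r)$ and the final degree formula are, as you say, bookkeeping.

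One remark on the step you flag as the main obstacle: you do not actually need the full ramification statement that the multiplicity of $v$ in $g(x)-Q_G(v)h(x)$ equals $|G_v|$ (which is true, but requires care in wild characteristic). For the direction asserted in the theorem you only need the implication $|G\circ v|=|G|\Rightarrow k=1$, and this follows from pure counting: the degree-$|G|$ polynomial $g(x)-\beta h(x)$ has root set equal to the fibre $Q_G^{-1}(\beta)=G\circ v$, which already contains $|G|$ distinct elements, so every root is simple; your equivariance argument (conjugation by $G$ and by Frobenius preserves stabiliser order) then makes every factor $g-\beta' h$ separable, and distinct fibres are disjoint, so $F^{Q_G}$ is squarefree and $k=1$. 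With that substitution your proposal closes the only gap it leaves open and constitutes a complete proof of the quoted statement.
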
  The polynomials $F\in\I_q$ for which $F^{Q_G}=(\prod G\ast r)^k$ with $k>1$ are of degree $\deg(F)\le 2$. To show that we use the fact that $F^{Q_G}$ is an orbit polynomial if every (or equivalently just one) root $v\in\overline{\F}_q$ of $F^{Q_G}$ is contained in a regular $G$-orbit, i.e. $|G\circ v|=|G|$ (Theorem \ref{othermain}) and irreducible polynomials $F$ not satisfying that condition are of degree less than or equal to $2$, as the following lemma shows:
\begin{lemma}[{\cite[Lemma 2.1]{bluher1}}]\label{bluherquad}
    Let $G\le \PG(\F_q)$ and set \begin{equation*}
        P_G:=\left\{v\in\overline{\F}_q\cup\{\infty\}\mid |G\circ v|<|G|\right\}.
    \end{equation*} We have $P_G\subseteq\F_{q^2}\cup\{\infty\}$ and $|P_G|\le 2(|G|-1)$. Moreover, $[\F_q(v):\F_q]\le 2$ for all $v\in P_G\setminus\{\infty\}$.
\end{lemma}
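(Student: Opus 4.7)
The plan is to identify $P_G$ with the union of fixed-point sets of non-identity elements of $G$, and then use the fact that any non-identity Möbius transformation has at most two fixed points. By the orbit-stabilizer formula applied to the action of $G$ on $\overline{\F}_q \cup \{\infty\}$, one has $v \in P_G$ precisely when the stabilizer $G_v := \{[A] \in G : [A] \circ v = v\}$ is non-trivial, so that
\begin{equation*}
P_G = \bigcup_{[A] \in G \setminus \{[I_2]\}} \operatorname{Fix}([A]),
\end{equation*}
where $\operatorname{Fix}([A])$ denotes the fixed-point set of $[A]$ on $\overline{\F}_q \cup \{\infty\}$.

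The heart of the argument is to control $\operatorname{Fix}([A])$ for a non-identity class $[A] \in \PG(\F_q)$ with representative $A = \begin{pmatrix} a & b \\ c & d \end{pmatrix}$. A finite $v$ satisfies $[A] \circ v = v$ if and only if
\begin{equation*}
cv^2 + (d-a)v - b = 0,
\end{equation*}
which is a polynomial equation of degree at most two over $\F_q$, while $\infty$ is fixed if and only if $c = 0$. I would then split into two cases: if $c \neq 0$, the equation is a genuine quadratic with at most two roots in $\F_{q^2}$ and $\infty \notin \operatorname{Fix}([A])$; if $c = 0$ but $[A] \neq [I_2]$, then $\infty$ is fixed, and the linear equation $(d-a)v = b$ contributes at most one further fixed point, which lies in $\F_q$. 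One must check that the quadratic is not the zero polynomial, which would force $c = 0$, $a = d$, $b = 0$ and hence $[A] = [I_2]$, a contradiction. In every case $|\operatorname{Fix}([A])| \le 2$ and $\operatorname{Fix}([A]) \subseteq \F_{q^2} \cup \{\infty\}$.

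Combining these two observations finishes the proof: the union bound yields $|P_G| \le 2(|G|-1)$, the inclusion $\operatorname{Fix}([A]) \subseteq \F_{q^2} \cup \{\infty\}$ for every non-identity $[A]$ gives $P_G \subseteq \F_{q^2} \cup \{\infty\}$, and for any $v \in P_G \setminus \{\infty\}$ the element $v$ satisfies a non-zero polynomial of degree at most two over $\F_q$, so $[\F_q(v):\F_q] \le 2$. The only subtle point I expect is bookkeeping in the degenerate case $c = 0$, where one has to distinguish pure translations (which fix only $\infty$) from non-identity upper-triangular matrices with $a \neq d$ (which fix $\infty$ together with a single point of $\F_q$); both are consistent with the bound $|\operatorname{Fix}([A])| \le 2$, but this is the one place where a slip could occur.
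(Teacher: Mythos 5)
Your argument is correct: the orbit-stabilizer identification of $P_G$ with the union of fixed-point sets of the non-identity elements of $G$, the quadratic $cv^2+(d-a)v-b=0$ governing finite fixed points, and the case split on $c=0$ all check out, and together they give $|P_G|\le 2(|G|-1)$, $P_G\subseteq\F_{q^2}\cup\{\infty\}$, and $[\F_q(v):\F_q]\le 2$ for finite $v\in P_G$. Note that the paper itself offers no proof here — it imports the lemma from Bluher's work by citation — so there is nothing to diverge from; your write-up is the standard self-contained argument one would expect behind that citation, and the one degenerate case you flag (upper-triangular, $a=d$, $b\neq 0$, fixing only $\infty$) is handled consistently with the bound.
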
 Let $F\in\I_q$. Note that if $F^{Q_G}$ has roots in non-regular $G$-orbits, then it only has irreducible factors of degree less than 3 by the lemma above. Moreover, we know that if $r$ is an irreducible factor of $F^{Q_G}$, then $\deg(F)\mid \deg(r)$, so $\deg(F)\le 2$, which is exactly what we wanted to show. Furthermore, there are only finitely many irreducible monic polynomials $F\in\I_q$ such that $F^{Q_G}$ is not a $G$-orbit polynomial but a proper power thereof as the number of non-regular $G$-orbits in $\overline{\F}_q\cup\{\infty\}$ is finite.

The next corollary is one of our main tools we make use of in this paper. Define $\I_q^G/G$ as the set of $G$-orbits in $\I_q^G$, that is, \begin{equation*}
    \I_q^G/G:=\{G\ast r \mid r\in\I_q^G\}.
\end{equation*} \begin{corollary}[{\cite[Corollary 25]{schulz}}]\label{main2}
The map $\delta_{Q_G}:\mathcal{I}_q\to\mathcal{I}_q^G/G$ with $F\mapsto G\ast r$ such that $F^{Q_G}=\prod(G\ast r)^k$ is a bijection.  
\end{corollary}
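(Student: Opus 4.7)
The plan is to exhibit an explicit inverse to $\delta_{Q_G}$ by evaluating $Q_G$ at a root of a chosen orbit representative. More precisely, given an orbit $G\ast r\in\I_q^G/G$, I would pick any root $\alpha\in\overline{\F}_q$ of $r$, set $\beta:=Q_G(\alpha)$, and send $G\ast r$ to the minimal polynomial of $\beta$ over $\F_q$. The well-definedness of $\delta_{Q_G}$ itself is essentially contained in Theorem \ref{othermain}: for each $F\in\I_q$ one has $F^{Q_G}=(\prod G\ast r)^k$, and the orbit $G\ast r$ is uniquely determined by $F^{Q_G}$, since the irreducible factors of $F^{Q_G}$ are exactly the elements of $G\ast r$.

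For injectivity, suppose $\delta_{Q_G}(F_1)=\delta_{Q_G}(F_2)=G\ast r$ and pick a root $\alpha$ of $r$. Because $r\in\I_q^G\subseteq\NR_q^G$, we have $\alpha\notin G\circ\infty$; writing $Q_G=g/h$, the poles of $Q_G$ all lie in $G\circ\infty$, so $h(\alpha)\neq 0$ and $\beta:=Q_G(\alpha)\in\overline{\F}_q$ is legitimately defined. The identity $F_i^{Q_G}(x)=h(x)^{\deg F_i}F_i(Q_G(x))$ combined with $F_i^{Q_G}(\alpha)=0$ forces $F_i(\beta)=0$ for $i=1,2$, and since both $F_i$ are monic irreducible over $\F_q$, each must equal the minimal polynomial of $\beta$, giving $F_1=F_2$.

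For surjectivity, take any $G\ast r\in\I_q^G/G$, pick a root $\alpha$ of $r$, set $\beta=Q_G(\alpha)$ and let $F\in\I_q$ be the minimal polynomial of $\beta$ over $\F_q$. Evaluating yields $F^{Q_G}(\alpha)=h(\alpha)^{\deg F}F(\beta)=0$, so $r\mid F^{Q_G}$ in $\F_q[x]$. By Theorem \ref{othermain}, all irreducible factors of $F^{Q_G}$ belong to a single $G$-orbit, which therefore has to be $G\ast r$ itself, and hence $\delta_{Q_G}(F)=G\ast r$.

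The only delicate point in this whole argument is ensuring that the evaluation $Q_G(\alpha)$ is a genuine element of $\overline{\F}_q$ rather than $\infty$; this is precisely the role of the $\NR_q^G$-condition baked into the definition of $\I_q^G$, guaranteeing that roots of $r$ avoid the pole set $G\circ\infty$ of $Q_G$. Once this technicality is handled, the remainder is essentially formal bookkeeping with the factorization description provided by Theorem \ref{othermain}.
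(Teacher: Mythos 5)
Your argument is correct, but note that the paper does not prove this statement at all: Corollary \ref{main2} is imported verbatim from \cite[Corollary 25]{schulz}, so what you have written is a self-contained verification rather than a variant of an in-paper proof. Your route --- producing an explicit inverse by evaluating $Q_G$ at a root of an orbit representative and taking the minimal polynomial of the image --- is clean and uses only Theorem \ref{othermain} plus the factorization identity $F^{Q_G}(x)=h(x)^{\deg F}F(Q_G(x))$. The one ingredient you invoke that is not actually stated in this paper is that the pole set of $Q_G$ (the roots of $h$, together with $\infty$) is exactly $G\circ\infty$; this is true (it is the fiber description $Q_G^{-1}(\infty)=G\circ\infty$ coming from the Galois correspondence for $\overline{\F}_q(x)/\overline{\F}_q(Q_G)$, and it implicitly underlies Lemma \ref{qginv}), but you should either cite it or, for the injectivity step, replace it by the shorter observation that if $h(\alpha)=0$ then $F^{Q_G}(\alpha)=g(\alpha)^{\deg F}\neq 0$ because $\gcd(g,h)=1$ and $F$ is monic, contradicting $r(\alpha)=0$ and $r\mid F^{Q_G}$. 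For the surjectivity step you do genuinely need roots of $h$ to lie in $G\circ\infty$ (so that $\beta=Q_G(\alpha)$ is finite for a root $\alpha$ of an arbitrary $r\in\I_q^G$), so a citation there is unavoidable. With that point pinned down, your proof is complete and arguably more transparent than deferring to the reference, since it makes visible why the correspondence $F\leftrightarrow G\ast r$ is nothing more than the identification of fibers of $Q_G$ with $G$-orbits of roots.
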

An irreducible monic $G$-invariant polynomial $f$ is a $G$-orbit polynomial, thus $f$ can be written as $f=F^{Q_G}$ for $F$ an irreducible monic polynomial if a root of $f$ is contained in a regular $G$-orbit by Theorem \ref{othermain} and Corollary \ref{main2}.
\section{Combinatorics of Orbits of Irreducible Polynomials}
  Set $d(Q_G)\in \mathbb{N}_0$ as the biggest number such that there exists an irreducible polynomial $F\in\I_q$ of degree $d(Q_G)$ with $F^{Q_G}=\prod(G\ast r)^k$ and $k>1$. If no such polynomial exists then $d(Q_G):=0$. Recall that $d(Q_G)\le 2$.
 \begin{remark}
     If $Q_G,Q_G'\in\F_q(x)$ are quotient maps for $G$, then $d(Q_G)=d(Q_G')$.
 \end{remark}
 \begin{proof}
     Let $F\in\I_q$ be of degree $d(Q_G)$ such that $F^{Q_G}=(\prod G\ast r)^k$ and $k>1$. It can be shown that there are $a\in \F_q^{\ast}$ and $b\in \F_q$ such that $Q_G(x)=aQ_G'(x)+b$, for reference see \cite[Proposition 3.4]{bluher1}. Since we want the numerator polynomial of quotient maps to be monic we write for $Q_G'=g'/h'$: $$Q_G(x)=aQ_G'(x)+b=\frac{g'(x)}{a^{-1}h'(x)}+b.$$ Thus we have \begin{align*}
         F^{Q_G}(x)&=(a^{-1}h'(x))^{\deg(F)}\cdot F(Q_G(x))=h'(x)^{\deg(F)}\cdot \left((a^{-1})^{\deg(F)}F(aQ_G'(x)+b)\right)\\
         &=h'(x))^{\deg(F)}\cdot (\underbrace{[A]\ast F}_{=:H(x)})^{Q_G'},
     \end{align*} where \begin{equation*}
         A=\left(\begin{array}{cc}
     a& b\\
     0& 1
     \end{array}\right).
     \end{equation*} Therefore $H(x)=[A]\ast F(x)$ is an irreducible polynomial of degree $\deg(H)=\deg(F)=d(Q_G)$. Additionally, $H^{Q_G'}=F^{Q_G}=\prod(G\ast r)^k$ with $k>1$, so $d(Q_G')\ge d(Q_G)$. Because of symmetry we get $d(Q_G')=d(Q_G)$.
 \end{proof}
This is why we can write $d(G)$ instead of $d(Q_G)$. We give an example that shows that all the values \{0,1,2\} are possible:
\begin{example}\label{examp}
    As a field we take $\F_2$. For $d(G)=0$ we take \begin{equation*}
        G=\left\{\left[\left(\begin{array}{cc}
     1& 1\\
     0& 1
     \end{array}\right)\right],\left[\left(\begin{array}{cc}
     1& 0\\
     0& 1
     \end{array}\right)\right]\right\}.
    \end{equation*} The two possible quotient maps are $Q_G(x)=x^2+x$ and $Q_G(x)=x^2+x+1$. Set $$A=\left(\begin{array}{cc}
     1& 1\\
     0& 1
     \end{array}\right),$$ then $$[A]\ast f(x)=f(x+1)$$ and $$[A]\circ v=v+1$$ for all $v\in\overline{\F}_2$ and $[A]\circ \infty=\infty$. Since $v\neq v+1$ for $v\in\overline{\F}_2$ all $G$-orbits in $\overline{\F}_2$ are regular, thus $d(G)=0$ by Theorem \ref{othermain}.

     For $d(G)=1$ we choose \begin{equation*}
         G=\left\{\left[\left(\begin{array}{cc}
     0& 1\\
     1& 0
     \end{array}\right)\right],\left[\left(\begin{array}{cc}
     1& 0\\
     0& 1
     \end{array}\right)\right]\right\}
     \end{equation*} with quotient map $Q_G(x)=x+1/x=(x^2+1)/x$. Note that $d(G)<2$ since $(x^2+x+1)^{Q_G}=x^4+x^3+x^2+x+1$ is irreducible. To show that $d(Q_G)\ge 1$ we calculate $$x^{Q_G}=x^2+1=(x+1)^2.$$

     For $d(G)=2$ we can look at \begin{equation*}
         G=\left\{\left[\left(\begin{array}{cc}
     1& 1\\
     1& 0
     \end{array}\right)\right],\left[\left(\begin{array}{cc}
     0& 1\\
     1& 1
     \end{array}\right)\right],\left[\left(\begin{array}{cc}
     1& 0\\
     0& 1
     \end{array}\right)\right]\right\}
     \end{equation*} with quotient map $Q_G(x)=(x^3+x+1)/(x^2+x)$. For this case we conveniently only have to look at \begin{align*}
         (x^2+x+1)^{Q_G}=(x^2+x+1)^3,
     \end{align*} so $d(G)=2$.

     Later we will often determine $d(G)$ by using the second part of Theorem \ref{othermain} in a contrapositive way, that means: \begin{equation*}
         F^{Q_G}=\prod(G\ast r)^k \text{ with } k>1 \Rightarrow |G\circ v|<|G| \text{ for a root }v\in\overline{\F}_q\text{ of }F^{Q_G}.     \end{equation*} Since there are only finitely many non-regular $G$-orbits we only have to check finitely many irreducible polynomials $F$ of degree $1$ or $2$. In this paper we only have to check at most $2$ polynomials.
     \end{example}   
     Let \begin{equation*}
     \omega_G(n,k):=|\{G\ast r \mid r\in\mathcal{I}_q^n\text{ and }|G\ast r|=k\}|
 \end{equation*} be the number of $G$-orbits in $\I_q^n$ of size $k$ and $N_q(n):=|\I_q^n|$. Our main result is the following: \begin{theorem}\label{comb}
     Let $G\le\PG(\F_q)$. For all $m> d(G)$ we get \begin{equation*}
         N_q(m)=\sum\limits_{k\mid |G|}\omega_G(m\cdot \frac{|G|}{k},k).
     \end{equation*}
 \end{theorem}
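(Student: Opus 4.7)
The plan is to apply the bijection $\delta_{Q_G}: \I_q \to \I_q^G/G$ from Corollary \ref{main2} and show that, when one restricts the domain to $\I_q^m$ for $m>d(G)$, the image is exactly the disjoint union of the orbit sets counted by the $\omega_G(m|G|/k,k)$ on the right-hand side. The formula then drops out from the fact that $\delta_{Q_G}$ is a bijection.

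Concretely, let $F\in\I_q^m$ with $m>d(G)$. By the very definition of $d(G)$, the transform $F^{Q_G}$ cannot be a proper power of an orbit polynomial, so $F^{Q_G}=\prod G\ast r$ for some $r\in\I_q^G$. Writing $k:=|G\ast r|$ (a divisor of $|G|$ by orbit-stabilizer), Lemma \ref{qginv} gives $\deg(F^{Q_G})=|G|\cdot m$, while $\deg(\prod G\ast r)=k\cdot\deg(r)$. Hence $\deg(r)=m\cdot|G|/k$, and $\delta_{Q_G}$ sends $\I_q^m$ into
\[
\bigsqcup_{k\mid|G|}\bigl\{\,G\ast r\,:\,r\in\I_q^{m|G|/k},\;|G\ast r|=k\,\bigr\},
\]
whose cardinality is exactly $\sum_{k\mid|G|}\omega_G(m|G|/k,k)$.

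For the reverse inclusion, pick any orbit $G\ast r$ in that union and let $F:=\delta_{Q_G}^{-1}(G\ast r)\in\I_q$, so that $F^{Q_G}=(\prod G\ast r)^{k_{\mathrm{exp}}}$ for some integer $k_{\mathrm{exp}}\ge 1$. The same degree computation as above now yields $|G|\deg(F)=k_{\mathrm{exp}}\cdot k\cdot(m|G|/k)$, i.e.\ $\deg(F)=k_{\mathrm{exp}}\cdot m$. If $k_{\mathrm{exp}}\ge 2$, then by definition of $d(G)$ the polynomial $F$ would satisfy $\deg(F)\le d(G)<m\le k_{\mathrm{exp}}\cdot m=\deg(F)$, a contradiction. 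Therefore $k_{\mathrm{exp}}=1$ and $\deg(F)=m$, so $F\in\I_q^m$.

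Combining the two directions, $\delta_{Q_G}$ restricts to a bijection between $\I_q^m$ and the disjoint union above, which yields $N_q(m)=\sum_{k\mid|G|}\omega_G(m|G|/k,k)$. The main subtlety is the contrapositive use of the definition of $d(G)$ in the surjectivity step, ensuring that for $m>d(G)$ no orbit in the image gets ``overcounted'' by a preimage of smaller degree; everything else is just bookkeeping with the degree identity $\deg(F^{Q_G})=|G|\deg(F)$ and orbit-stabilizer.
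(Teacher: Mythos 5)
Your proposal is correct and follows essentially the same route as the paper: the same bijection $\delta_{Q_G}$ from Corollary \ref{main2}, the same identification of $\delta_{Q_G}(\I_q^m)$ with the union of the orbit sets via the degree identity $\deg(r)\cdot|G\ast r|=m\cdot|G|$, and the same contradiction with the definition of $d(G)$ to rule out proper powers in the surjectivity step. The only difference is cosmetic indexing (the paper parametrizes by pairs $(n,k)$ with $nk=|G|m$ rather than directly by $k\mid |G|$).
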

     \begin{proof}  Define \begin{equation*}
        M_m(G):=\{(a,b)\in\mathbb{N}^2:~b\mid |G| \text{ and } a\cdot b=|G|\cdot m\}
    \end{equation*} and for $n\ge 1$ \begin{equation*}
        \Omega_{G}(n,k):=\{G\ast r \mid r\in\mathcal{I}_q^n \text{ and } |G\ast r|=k\},
    \end{equation*} so $\omega_G(n,k)=|\Omega_{G}(n,k)|$. We want to show that \begin{equation*}
       \delta_{Q_G}(\mathcal{I}_q^m)=\bigcup_{(n,k)\in M_m(G)}\Omega_{G}(n,k).
   \end{equation*} for $\delta_{Q_G}$ as in Theorem \ref{main2}. Let $F\in\mathcal{I}_q^m$, then by Theorem \ref{main2} together with $m>d(G)$ we have that $F^{Q_G}=\prod(G\ast r)$ for $r\in\mathcal{I}_q^G$ and by Theorem \ref{othermain} \begin{equation*}
       \deg(r)\cdot |G\ast r|=m \cdot |G|.
   \end{equation*} Thus $(\deg(r),|G\ast r|)\in M_m(G)$ and \begin{equation*}
       \delta(F)\in \Omega_G(\deg(r),|G\ast r|)\subseteq \bigcup\limits_{(n,k)\in M_m(G)}\Omega_{G}(n,k).
   \end{equation*} This shows $\delta(\mathcal{I}_q^m)\subseteq \bigcup_{(n,k)\in M_m(G)}\Omega_{G}(n,k)$. Conversely let $G\ast r\in\Omega_{G}(n,k)$ with $(n,k)\in M_m(G)$, thus $|G\ast r|=k$ and $\deg(r)=n$. Moreover  \begin{equation*}
       \deg(r)\cdot |G\ast r|=n\cdot k=|G|\cdot m
   \end{equation*} By Theorem \ref{main2} there exists a polynomial $F\in\mathcal{I}_q$ such that $F^{Q_G}=\prod(G\ast r)^l$ and $l\ge 1$. Hence \begin{align*}
       \deg(F^{Q_G})&=|G|\cdot \deg(F)\\&=l\cdot \deg(r)\cdot |G\ast r|=l\cdot (|G|\cdot m)
   \end{align*} So $\deg(F)=l\cdot m>d(G)$ and as a consequence $l=1$. Therefore the degree of $F$ is $m$ which shows \begin{equation*}
       \delta_{Q_G}(\mathcal{I}_q^m)\supseteq\bigcup_{(n,k)\in M_m(G)}\Omega_{G}(n,k),
   \end{equation*} so both sets are equal. Since $\delta_{Q_G}$ is a bijection we get \begin{align*}
      N_q(m)&=|\delta_{Q_G}(\mathcal{I}_q^m)|=|\bigcup_{(n,k)\in M_m(G)}\Omega_{G}(n,k)|=\sum\limits_{(n,k)\in M_m(G)}|\Omega_{G}(n,k)|\\&=\sum\limits_{(n,k)\in M_m(G)} \omega_{G}(n,k)=\sum\limits_{k\mid |G|}\omega_{G}(m\cdot \frac{|G|}{k},k).
   \end{align*} 
   \end{proof}
   This formula can be used to obtain the number of irreducible monic $G$-invariant polynomials, because \begin{equation*}
       \omega_G(|G|\cdot m,1)=N_q(m)-\sum\limits_{\substack{k\mid |G|,\\ k\neq 1}}\omega_G(m\cdot \frac{|G|}{k},k)
   \end{equation*} and the $G$-orbits of size 1 are the irreducible monic $G$-invariant polynomials. However, as the numbers $\omega_G(n,k)$ are hard to compute in general this formula is not practical. For the exact counting formulae of $G$-invariant polynomials see \cite{reisEx}. Some special values of $\omega_G(n,k)$ are: \begin{itemize}
       \item $\omega_G(n,1)=0$ if $n>2$ and $|G|\nmid n$ (Theorem \ref{othermain} and Lemma \ref{bluherquad})
       \item $\omega_G(n,1)=0$ if $n>2$ and $G$ is non-cyclic (\cite[Theorem 1.3]{reisEx} or \cite[Corollary 35]{schulz})
       \item $\omega_G(n,k)=\frac{1}{k}|N_q(n)|$ if $k\nmid n$ and $|G|=k$ is prime (use first item and standard group action arguments)
   \end{itemize}
There is one family of subgroups for which Theorem \ref{comb} yields an easy counting formula: \begin{corollary}\label{easycomb}
    Let $G\le\PG(\F_q)$ be a cyclic subgroup with $|G|=s$ and $s$ is prime. For all $m>d(G)$ we have \begin{equation*}
        N_q(m)=\omega_G(m,s)+\omega_G(m\cdot s,1)
    \end{equation*} 
\end{corollary}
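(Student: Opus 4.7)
The plan is to obtain this as an immediate specialization of Theorem \ref{comb}. Since $|G|=s$ is prime, the only divisors of $|G|$ are $1$ and $s$, so the index set $\{k\mid k\text{ divides }|G|\}$ appearing in Theorem \ref{comb} has just two elements.

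Concretely, I would apply Theorem \ref{comb} to the cyclic group $G$ of prime order $s$ for any fixed $m>d(G)$. Expanding the sum
\begin{equation*}
    N_q(m)=\sum_{k\mid |G|}\omega_G\!\left(m\cdot \tfrac{|G|}{k},k\right)
\end{equation*}
over the two divisors $k=1$ and $k=s$ gives exactly $\omega_G(m\cdot s,1)+\omega_G(m,s)$, which is the claimed identity.

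There is essentially no obstacle: the hypothesis $m>d(G)$ is the one needed by Theorem \ref{comb}, and primality of $|G|$ is used purely to restrict the divisor sum. One could optionally remark that the two terms admit a clean interpretation via Theorem \ref{othermain}: $\omega_G(m,s)$ counts the $G$-orbits of size $s$ in $\I_q^m$ (corresponding to irreducible $F\in\I_q^m$ such that $F^{Q_G}$ is itself irreducible of degree $sm$), while $\omega_G(sm,1)$ counts the $G$-invariant irreducible polynomials of degree $sm$ (corresponding to irreducible $F\in\I_q^m$ whose transform $F^{Q_G}$ splits into $s$ irreducible factors of degree $m$ that form a single orbit). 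This makes the formula transparent but is not required for the proof itself.
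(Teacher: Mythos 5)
Your proof is correct and is exactly the paper's (implicit) argument: the corollary is presented as an immediate specialization of Theorem \ref{comb} to a cyclic group of prime order, where the divisor sum collapses to the two terms $k=1$ and $k=s$, giving $N_q(m)=\omega_G(ms,1)+\omega_G(m,s)$. One caution about your optional remark: the two parenthetical interpretations are swapped --- $\omega_G(ms,1)$ counts the orbits of size $1$ in $\I_q^{ms}$, i.e.\ the $G$-invariant irreducible polynomials of degree $ms$, and under $\delta_{Q_G}$ these correspond to the $F\in\I_q^m$ with $F^{Q_G}$ irreducible (this is precisely how the paper later uses $|C(Q_G,n)|=\omega_G(ns,1)$ in the proof of Theorem \ref{backbone}), while $\omega_G(m,s)$ counts the size-$s$ orbits of degree-$m$ irreducibles, corresponding to those $F$ whose transform splits into $s$ conjugate irreducible factors of degree $m$.
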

This is enough to prove Theorem \ref{backbone}. 
\begin{proof}[Proof (Theorem \ref{backbone}).]
    If $|G|=s$ is prime then $|C(Q_G,n)|=\omega_G(n\cdot s,1)$ and $|D(Q_G,n)|=N_q(n)-|C(Q_G,n)|=\omega_G(n,s)$ by Corollary \ref{easycomb} and $n>d(G)$. Thus we have \begin{align*}
        |C(Q_G,n)|-(s-1)|D(Q_G,n)|&=\omega_G(ns,1)+(s-1)\omega_G(n,s)\\&=\left(\omega_G(ns,1)+\omega_G(n,s)\right)-s\omega_G(n,s)\\&=N_q(n)-s\omega_G(n,s)=\omega_G(n,1)
    \end{align*} For all $n>d(G)$ with $s\nmid n$ we have $$|C(Q_G,n)|-(s-1)|D(Q_G,n)|=\omega_G(n,1)=0.$$

    If $s\mid n$ and $n/s>d(G)$ then $|C(Q_G,n/s)|=\omega_G(n,1)$ by Corollary \ref{easycomb}. Moreover, if $n>2$ and $s\mid n$ then also $|C(Q_G,n/s)|=\omega_G(n,1)$ by Theorem \ref{othermain} since all $G$-invariant irreducible polynomials $f\in \I_q^n$ have roots that lie in regular orbits. That is the case because $[\F_q(\gamma):\F_q]=n>2$ for a root $\gamma$ of $f$ and all non-regular $G$-orbits are contained in $\F_{q^2}\cup\{\infty\}$; see Lemma \ref{bluherquad}. The only cases for $n$ that are left to cover are for $d(G)<n\le \min\{2,d(G)\cdot s\}$. If $d(G)\in \{0,2\}$ there are no such $n$. Thus $d(G)=1$ and because $s>1$ also $n=2$. Since we still are in the case $s\mid n$ we have $s=2=|G|$. So the last case we have to cover is $d(G)=1$ and $n=s=2$. Let $F\in\I_q$ be of degree $d(G)=1$ such that $F^{Q_G}=\prod(G\ast r)^k$ with $k\ge 2$. Then $$\deg(F^{Q_G})=|G|\cdot \deg(F)=2=k\cdot |G\ast r|\cdot \deg(r)\ge 2|G\ast r|\cdot \deg(r)$$ so $\deg(r)=1=|G\ast r|$. Hence for all irreducible $G$-invariant polynomials $f$ of degree 2 there is a polynomial $F$ of degree 1 such that $F^{Q_G}=f$ by Corollary \ref{main2}, so $|C(Q_G,1)|=\omega_G(2,1)$.

In all three subcases of $s\mid n$ we have $|C(Q_G,n/s)|=\omega_G(n,1)$, which concludes the proof.
\end{proof}
\section{Examples}
\subsection{Proof of Theorem \ref{ahmad}}
For proving Theorem \ref{ahmad} we need to consider the cyclic subgroup \begin{equation*}
    G:=\left\langle\left[\left(\begin{array}{cc}
     1& 1\\
     0& 1
     \end{array}\right)\right]\right\rangle.
\end{equation*} It has order $p=\operatorname{char}(\F_q)$ and a quotient map is $Q_G(x)=x^p-x$. Note that the $Q_G$-transformation of polynomials $F\in\F_q[x]$ with $Q_G$ is just the composition of $F$ with $Q_G$, that is, $F^{Q_G}(x)=F(Q_G(x))$. The condition for $F(Q_G(x))=F(x^p-x)$ to be irreducible is well-known and originally due to Varshamov, see for example \cite[Lemma 1.1]{comppoly} and the references therein. For $F\in\I_q^n$ and $\alpha\in\overline{\F}_q$ a root of $F$ we have \begin{equation*}
    F(Q_G(x))\in\F_q[x] \text{ is irreducible} \Leftrightarrow \Tr_{q^n/p}(\alpha)\neq 0.
\end{equation*} As mentioned in the introduction we can write the condition as follows: \begin{equation*}
    F(Q_G(x))\in\F_q[x] \text{ is irreducible }\Leftrightarrow \Tr(F)\neq 0.
\end{equation*} Hence \begin{equation*}
    \bigcup\limits_{a\in\F_p^{\ast}}S_a(n)=C(Q_G,n)
\end{equation*} and $D(Q_G,n)=S_0(n)$. The number $d(G)=0$ since the only non-regular $G$-orbit in $\overline{\F}_q\cup\{\infty\}$ is $\{\infty\}$. Applying Theorem \ref{backbone} gives \begin{align*}
    \left|\bigcup\limits_{a\in\F_p^{\ast}}S_a(n)\right|-(p-1)|S_0(n)|&=|C(Q_G,n)|-(p-1)|D(Q_G,n)|\\&=\begin{cases}
        0, &p\nmid n\\
        |C(Q_G,n/p)|, &p\mid n
    \end{cases}\\&=\begin{cases}
        0, &p\nmid n\\
        |\bigcup\limits_{a\in\F_p^{\ast}}S_a(n/p)|, &p\mid n
    \end{cases}
\end{align*} for all $n>d(G)=0$. This proves Theorem \ref{ahmad}.
\subsection{Proof of Theorem \ref{quad}} Assume $q\equiv 1 \pmod 2$ and let $u,v\in\F_q$ such that $u\neq v$. Consider the matrix \begin{equation*}
    A_{u,v}:=\left(\begin{array}{cc}
     \frac{1}{2}(u+v)& -uv\\
     1& -\frac{1}{2}(u+v)
     \end{array}\right).
\end{equation*} Since $[A_{u,v}]^2=[I_2]$ the cyclic group $G_{u,v}:=\langle [A_{u,v}]\rangle\le \PG(\F_q)$ contains only 2 elements. As a quotient map for $G_{u,v}$ we choose \begin{equation*}
    Q_{G_{u,v}}(x)=\frac{1}{2}(x+[A_{u,v}]\circ x)=\frac{x^2-uv}{2x-(u+v)}.
\end{equation*} Note that $Q_{G_{u,v}}(u)=u$ and $Q_{G_{u,v}}(v)=v$ since $[A_{u,v}]\circ u=u$ and $[A_{u,v}]\circ v=v$. Moreover $\{u\}$ and $\{v\}$ are the only non-regular $G_{u,v}$-orbits in $\overline{\F}_q\cup\{\infty\}$, hence $d(G_{u,v})$ is the highest degree of the two polynomials $F_1,F_2\in\I_q$ (if they exist) such that \begin{align*}
    F_1^{Q_{G_{u,v}}}(x)&=(x-u)^2\\
    F_2^{Q_{G_{u,v}}}(x)&=(x-v)^2
\end{align*} by Theorem \ref{othermain}. We chose $Q_{G_{u,v}}$ so that $(x-u)^{Q_{G_{u,v}}}=(x-u)^2$ and $(x-v)^{Q_{G_{u,v}}}=(x-v)^2$, thus $d(G_{u,v})=1$.

Let $F\in\I_q$, then $F^{Q_{G_{u,v}}}$ is irreducible if and only if \begin{align*}
    P(x)&:=(x^2-uv)-\gamma(2x-(u+v))\\
    &=x^2-2\gamma x+(\gamma(u+v)-uv)\in\F_q(\gamma)[x]
\end{align*} is irreducible for $\gamma\in\overline{\F}_q$ a root of $F$ by Lemma \ref{cohen}. A quadratic polynomial over $\F_q(\gamma)$ is irreducible if and only if it has no roots in $\F_q(\gamma)$. For $P$ this is equivalent to $4\gamma^2-4(\gamma(u+v)-uv)$ being a non-square in $\F_q(\gamma)=\F_{q^{\deg(F)}}$. Hence \begin{align*}
    -1&=\left(\frac{4\gamma^2-4(\gamma(u+v)-uv)}{q^{\deg(F)}}\right)=\left(\frac{\gamma^2-(u+v)\gamma+uv}{q^{\deg(F)}}\right)\\&=\left(\frac{(\gamma-u)(\gamma-v)}{q^{\deg(F)}}\right)=\left((\gamma-u)(\gamma-v)\right)^{\frac{q^{\deg(F)}-1}{2}}\\&=\left((u-\gamma)(v-\gamma)\right)^{\frac{q^{\deg(F)}-1}{q-1}\cdot \frac{q-1}{2}}=\left(\prod\limits_{i=0}^{\deg(F)-1}(u-\gamma^{q^i})\right)^{\frac{q-1}{2}}\cdot \left(\prod\limits_{i=0}^{\deg(F)-1}(v-\gamma^{q^i})\right)^{\frac{q-1}{2}}\\&=F(u)^{\frac{q-1}{2}}\cdot F(v)^{\frac{q-1}{2}}=\left(\frac{F(u)\cdot F(v)}{q}\right).
\end{align*} This calculation is very similar to a calculation trick that Meyn used in \cite[Proof of Theorem 8]{meyn}. We showed that $C(Q_{G_{u,v}},n)=\{f\in\I_q^n\mid \left(\frac{f(u)f(v)}{q}\right)=-1\}$, the rest of the proof of Theorem \ref{quad} follows from Theorem \ref{backbone}.
\subsection{An Example similar to Theorem \ref{quad}} Let $\F_q$ be an arbitrary finite field, $s$ a prime dividing $q-1$ and $c\in\F_q$. Define \begin{align*}
    T_c(n)&:=\left\{f\in\I_q^n\mid  f(c)^{\frac{q-1}{s}}\neq (-1)^{\frac{(q-1)n}{s}}\right\}\\
    U_c(n)&:=\I_q^n\setminus T(n)=\left\{f\in\I_q^n\mid f(c)^{\frac{q-1}{s}}= (-1)^{\frac{(q-1)n}{s}}\right\}.
\end{align*} We are going to prove the following theorem \begin{theorem}\label{lastexam}
    Let $\F_q$ be an arbitrary finite field and $s$ a prime dividing $q-1$. Moreover let $c\in\F_q$, then we have for all $n\ge 2$ that \begin{equation*}
        |T_c(n)|-(s-1)|U_c(n)|=\begin{cases}
            0,&\text{ if }s\nmid n\\
            |T_c(n/s)|,&\text{ if }s\mid n
        \end{cases}
    \end{equation*}
\end{theorem}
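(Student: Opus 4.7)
The plan is to apply Theorem~\ref{backbone} to an appropriately chosen cyclic subgroup of order $s$ inside $\PG(\F_q)$. Since $s \mid q-1$, I pick $\zeta \in \F_q^{\ast}$ of order $s$ and set
\[
A := \left(\begin{array}{cc} \zeta & c(1-\zeta) \\ 0 & 1 \end{array}\right),
\]
so $[A]\circ x = \zeta(x-c) + c$. Then $G := \langle [A]\rangle \le \PG(\F_q)$ is cyclic of order $s$, and $Q_G(x) := (x-c)^s + c$ is a quotient map, because it is $G$-invariant ($Q_G([A]\circ x) = \zeta^s(x-c)^s + c = Q_G(x)$) and its monic numerator has degree $s = |G|$. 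The fixed points of $[A]$ on $\overline{\F}_q \cup \{\infty\}$ are exactly $c$ and $\infty$, so the only non-regular $G$-orbits are $\{c\}$ and $\{\infty\}$. By Theorem~\ref{othermain}, if $F \in \I_q$ of degree $\le 2$ satisfies $F^{Q_G} = (\prod G\ast r)^k$ with $k>1$, then a root of $F^{Q_G}$ is non-regular, hence equals $c$, forcing $F(c) = 0$; the only such irreducible $F$ is $F = x-c$, and indeed $(x-c)^{Q_G} = (x-c)^s$. Hence $d(G) = 1$, matching the hypothesis $n \ge 2$.

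The main work is identifying $C(Q_G,n) = T_c(n)$. For $F \in \I_q^n$ with root $\alpha$, Lemma~\ref{cohen} says $F^{Q_G}$ is irreducible iff $(x-c)^s + c - \alpha$ is irreducible over $\F_{q^n} = \F_q(\alpha)$. Substituting $y = x-c$ reduces this to asking whether $y^s - (\alpha - c)$ is irreducible over $\F_{q^n}$, and since $s$ is a prime distinct from the characteristic $p$ (because $s \mid q-1$), the Vahlen--Capelli theorem yields irreducibility iff $\alpha - c$ is not an $s$-th power in $\F_{q^n}^{\ast}$, equivalently
\[
(\alpha - c)^{(q^n-1)/s} \neq 1.
\]

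Finally I would translate this into a condition on $F(c)$ via the norm identity
\[
(\alpha - c)^{(q^n-1)/(q-1)} = \prod_{i=0}^{n-1}(\alpha - c)^{q^i} = \prod_{i=0}^{n-1}(\alpha^{q^i} - c) = (-1)^n F(c),
\]
from which $(\alpha - c)^{(q^n-1)/s} = (-1)^{n(q-1)/s} F(c)^{(q-1)/s}$. The irreducibility condition becomes $F(c)^{(q-1)/s} \neq (-1)^{n(q-1)/s}$, which is precisely the defining condition of $T_c(n)$. Thus $C(Q_G,n) = T_c(n)$, $D(Q_G,n) = U_c(n)$, and Theorem~\ref{lastexam} is immediate from Theorem~\ref{backbone}. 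The main obstacle is the algebraic bookkeeping in this last step, essentially parallel to the calculation carried out for Theorem~\ref{quad}; the rest is a clean application of the machinery developed in Section~2.
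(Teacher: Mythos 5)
Your proposal is correct and follows essentially the same route as the paper: the same matrix $A$ (with $b=(1-\zeta)c$), the same quotient map $Q_G(x)=(x-c)^s+c$, the determination $d(G)=1$ via $(x-c)^{Q_G}=(x-c)^s$, and the same norm computation identifying $C(Q_G,n)=T_c(n)$ before applying Theorem~\ref{backbone}. The only cosmetic difference is that you invoke Vahlen--Capelli where the paper uses the equivalent folklore criterion (Lemma~\ref{crit}) that $x^s-a$ is irreducible over $\F_{q^n}$ iff $a^{(q^n-1)/s}\neq 1$.
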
 Before we start with the proof we need to formulate a lemma first, which is folklore. \begin{lemma}\label{crit}
    Let $\F_q$ be an arbitrary finite field, $c\in\F_q^{\ast}$ and $s$ a prime dividing $q-1$. The polynomial $x^s-c\in\F_q[x]$ is irreducible if and only if $c^{\frac{q-1}{s}}\neq 1$. 
\end{lemma}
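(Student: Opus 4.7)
The plan is to translate the condition $c^{(q-1)/s} \neq 1$ into the structural statement that $c$ is not an $s$-th power in $\F_q^{\ast}$, and then analyze the action of the Frobenius on a root of $x^s-c$. Since $s \mid q-1$, the $s$-th power map on the cyclic group $\F_q^{\ast}$ has kernel of order $s$ and hence image of index $s$; membership in the image is detected precisely by $c^{(q-1)/s} = 1$. The easy direction then follows immediately: if $c = b^s$ for some $b \in \F_q^{\ast}$, then $x-b$ divides $x^s-c$, so the binomial is reducible.

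For the nontrivial direction, I would suppose $c^{(q-1)/s} \neq 1$ and fix a root $\alpha \in \overline{\F}_q$ of $x^s - c$. The key computation is
$\alpha^{q-1} = \alpha^{s \cdot (q-1)/s} = c^{(q-1)/s}$.
Set $\zeta := \alpha^{q-1}$; by hypothesis $\zeta \neq 1$, and since $\zeta^s = 1$ with $s$ prime, $\zeta$ has order exactly $s$ in $\F_q^{\ast}$ (note that $\zeta$ actually lies in $\F_q$ because $s \mid q-1$ guarantees that $\F_q$ already contains all $s$-th roots of unity).

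Next I would compute $d := [\F_q(\alpha):\F_q]$ as the smallest positive integer with $\alpha^{q^d} = \alpha$. Using the telescoping identity $\alpha^{q^d - 1} = \alpha^{(q-1)(1 + q + \cdots + q^{d-1})} = \zeta^{1 + q + \cdots + q^{d-1}}$, together with $q \equiv 1 \pmod s$, the exponent collapses to $d \pmod s$, so the condition $\alpha^{q^d} = \alpha$ becomes $s \mid d$. The smallest such $d$ is $s$ itself, whence the minimal polynomial of $\alpha$ over $\F_q$ has degree $s$ and must coincide with $x^s - c$, proving irreducibility. I do not expect any real obstacle here; the only points requiring care are the clean bookkeeping of the Frobenius exponents and the use of primality of $s$ to force $\zeta$ to have order exactly $s$ once $\zeta \neq 1$ is known.
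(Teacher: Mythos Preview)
Your proof is correct. The paper, however, does not actually prove this lemma: it explicitly calls it ``folklore'' and states it without proof, so there is no argument in the paper to compare yours against. Your direct computation of the Frobenius orbit of a root $\alpha$---using $\alpha^{q-1}=c^{(q-1)/s}=:\zeta$ and the congruence $1+q+\cdots+q^{d-1}\equiv d\pmod{s}$ to conclude that the smallest $d$ with $\alpha^{q^d}=\alpha$ is $d=s$---is a clean, self-contained justification and is exactly the kind of standard argument the word ``folklore'' is pointing to.
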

\begin{proof}[Proof. (Theorem \ref{lastexam})]
    We consider the following matrix \begin{equation*}
    A:=\left(\begin{array}{cc}
     a& b\\
     0& 1
     \end{array}\right). 
\end{equation*} where $a\in\F_q^{\ast}$ has order $s>1$ which is prime and $b\in\F_q$ is arbitrary. The group $G:=\langle [A]\rangle$ has order $s$. The fixed points of $[A]$ are $\infty$ and $c:=\frac{-b}{a-1}$ and these are again the only non-regular orbits in $\overline{\F}_q\cup\{\infty\}$. Note that we can obtain every $c\in\F_q$ for fixed $a\in\F_q^{\ast}\setminus\{1\}$ by choosing $b=(1-a)\cdot c$. A quotient map for $G$ is \begin{equation*}
    Q_G(x)=(x-c)^s+c.
\end{equation*} and $(x-c)^{Q_G}=(x-c)^s$, so $d(G)=1$. 

Let $F\in\I_q^n$, then $F^{Q_G}(x)=F(Q_G(x))$ is irreducible if and only if \begin{equation*}
    P(x)=Q_G(x)-\gamma=(x-c)^s+c-\gamma\in\F_{q^n}[x] 
\end{equation*} is irreducible by Lemma \ref{cohen}, where $\gamma$ is a root of $F$. The polynomial $P(x)\in\F_{q^n}[x]$ is irreducible if and only if $P(x+c)=x^s-(\gamma-c)\in\F_{q^n}[x]$ is irreducible. By Lemma \ref{crit} this is the case exactly when $(\gamma-c)^{(q^n-1)/s}\neq 1$. Now we calculate: \begin{align*}
    1&\neq (\gamma-c)^{\frac{q^n-1}{s}}=(-1)^{\frac{q^n-1}{q-1}\cdot \frac{q-1}{s}}\cdot (c-\gamma)^{\frac{q^n-1}{q-1}\cdot \frac{q-1}{s}}\\&=\left(\prod\limits_{i=0}^{n-1}(-1)^{q^i}\right)^{\frac{q-1}{s}}\cdot \left(\prod\limits_{i=0}^{n-1}(c-\gamma^{q^i})\right)^{\frac{q-1}{s}}=(-1)^{\frac{(q-1)n}{s}}\cdot f(c)^{\frac{q-1}{s}}
\end{align*} Hence $C(Q_G,n)=T_c(n)$, the rest follows from Theorem \ref{backbone} again.
\end{proof} \begin{remark}\begin{enumerate}
    \item If we take $s=2$ the condition in $T_c(n)$ is \begin{equation*}
        (-1)^{\frac{(q-1)n}{2}}\neq f(c)^{\frac{q-1}{2}}=\left(\frac{f(c)}{q}\right).
    \end{equation*} This looks quite similar to the defining condition of $C_{u,v}(n)$ in Theorem \ref{quad}.
    \item In Theorem \ref{lastexam} we used the criterion of Lemma \ref{crit} for the irreducibility of binomials of the form $x^s-c$ where $s$ is a prime dividing $q-1$. If the reader is interested in a recent paper that explains the factorization of polynomials $x^n-c\in\F_q[x]$ for arbitrary $n$ we refer them to \cite{graner2}.
\end{enumerate}

\end{remark}
\printbibliography
\end{document}